\index{}

\documentclass[12pt,reqno]{amsart}
\usepackage{amssymb , amsmath, amsthm}
\usepackage{amsmath}
\usepackage{amsbsy}
\usepackage{amssymb}
\usepackage{color}
\usepackage[dvips]{graphicx}
 \usepackage{subfigure}
\usepackage[active]{srcltx}
 \oddsidemargin=1cm
 \evensidemargin=1cm
\textwidth=15.5cm
\textheight=8.2in
\topmargin=2cm

\parindent=0pt

\usepackage[active]{srcltx}

\usepackage{amsmath}
\usepackage{amsfonts}
\usepackage{amssymb}
\usepackage{amstext}
\usepackage{amsbsy}
\usepackage{epsf}

\newtheorem{theo}{Theorem}[section]
\newtheorem{prop}{Proposition}[section]
\newtheorem{coro}{Corollary}[section]
\newtheorem{lemma}{Lemma}[section]
\theoremstyle{definition}
\newtheorem{definition}{Definition}[section]


\newcommand{\defref}[1]{Definition~\ref{#1}}




\def\r{\mathbb R}

\def\h{\mathbb H}



\let\hip=\Hip

\newcommand{\R}{\mathbb R}

\newcommand{\C}{\mathbb C}

\newcommand{\N}{\mathbb N}
\newcommand{\Ne}{{\mathbb N}^*}

\newcommand{\wt}{\widetilde}

\renewcommand{\Re}{{\rm Re}}
\renewcommand{\Im}{{\rm Im}}

\let\h=\hip

\let\re=\R

\def \ga{\gamma}

\def \ga{\gamma}

\def\rmd{\mathop{\rm d\kern -1pt}\nolimits}
\def\rme{\mathop{\rm e\kern -1pt}\nolimits}




\def\bel{ \medskip
 \centerline{$ \ast \hbox to 1.0cm{}\ast \hbox to 1.0cm{}\ast $}
}

\def\grad{\nabla}

\def\longerrightarrow{-\kern-5pt\longrightarrow}

\def\star{\lower 1pt\hbox{*}}
\def \nulset {
\raise 1pt\hbox{ \hskip -3pt$\not$\kern -0.2pt \raise
.7pt\hbox{${\scriptstyle\bigcirc}$}}}

\def\grad{\nabla}

\newcommand{\hd}{\mathbb{H}^2}
\newcommand{\sd}{\mathbb{S}^2}
\newcommand{\hi}[1]{\mathbb{H}^#1}

\newcommand{\ch}{\cosh}
\newcommand{\sh}{\sinh}

\newcommand{\pain}{\partial_{\infty}}

\newcommand{\ov}[1]{\overline{#1}}

\let\leq=\leqslant
\let\geq=\geqslant






\begin{document}

\title[minimal stable vertical planar end]{A
minimal stable vertical planar end in $\hi2\times \r$ \\ has
finite total curvature}

\author[R. Sa
Earp $\ $ and $\ $ E. Toubiana  ]{
 Ricardo Sa Earp and
Eric Toubiana}

 \address{Departamento de Matem\'atica \newline
  Pontif\'\i cia Universidade Cat\'olica do Rio de Janeiro\newline
Rio de Janeiro \newline
22453-900 RJ \newline
 Brazil }
\email{earp@mat.puc-rio.br}

\address{Institut de Math\'ematiques de Jussieu - Paris Rive Gauche \newline
Universit\'e Paris Diderot - Paris 7 \newline
Equipe G\'eom\'etrie et Dynamique,  UMR 7586 \newline
B\^atiment Sophie Germain \newline
Case 7012 \newline
75205 Paris Cedex 13 \newline
France}
\email{toubiana@math.jussieu.fr}

\thanks{
 Mathematics subject classification: 53A10, 53C42, 49Q05.
\\
The authors were partially supported by CNPq
and FAPERJ of Brasil.}

\date{\today}

\begin{abstract}
We prove that a minimal oriented stable  annular end in
$\hi2\times \r$
whose asymptotic boundary is contained in two vertical lines
 has finite total curvature and converges to a vertical plane. Furthermore, if
the end is  embedded then
 it is a horizontal graph.
\end{abstract}

\keywords{minimal surface, asymptotic boundary, stable minimal end, finite total
curvature,
curvature estimates, horizontal graph.}

\maketitle

\section{Introduction}

Since the last decades there is an increasing  interest among geometers to study
minimal surfaces in $\hi2\times\r$ with a certain prescribed asymptotic
boundary, where $\hi2$ stands for the hyperbolic plane.
\smallskip

In a joint work with B. Nelli  \cite{NST}, the authors
characterized the
catenoids
in $\hi2\times\r$
among  minimal  surfaces with the same asymptotic boundary. That is, a
connected and complete minimal surface whose asymptotic boundary is the union
of two distinct copies of $\pain \hi{2}$ is a catenoid. Noticing that the
catenoids are the unique minimal surfaces of revolution in $\hi2 \times \r$
and each catenoid  $M$ has infinite total curvature, that is
$\int_M |K|\, dA=\infty$, where $K$ is the Gaussian curvature of $M$.

\smallskip

In this paper, we prove that a minimal oriented stable  annular end in
$\hi2\times \r$
whose asymptotic boundary is contained in two vertical lines
 has
finite total curvature and converges to a vertical plane (Theorem
\ref{Main Theorem}). Furthermore, if the end is embedded then it is a
horizontal graph with respect to a geodesic in $\hi2\times \{0\}$
(\defref{D.horizontal graph}).

\smallskip

We point out that in Euclidean space a famous result
of D. Fisher-Colbrie \cite{F-C}
states that
a complete oriented minimal surface has finite index if and only if it has
finite total curvature.
Observe that in $\hi2 \times \r$, finite total curvature of a complete oriented
minimal surface implies finite index
\cite{B-SE}, but the converse does not hold: there are many examples of oriented complete
stable minimal surfaces with infinite total curvature.

\smallskip

Indeed,
 there are families of oriented complete stable minimal
surfaces
invariant by a nontrivial group of screw-motions \cite{Sa}.
A particular example is a connected, complete and stable minimal surface
whose
asymptotic boundary is the union of an arc in $\pain \hi2 \times \{0\}$ with
the two upper half vertical lines issuing from the two boundary points
$\{p_\infty, q_\infty\}$ of the
arc \cite[Proposition 2.1-(2)]{SE-T}. Furthermore, this surface has strictly
negative Gaussian curvature and
is invariant by
any translation along the geodesic line whose asymptotic boundary is
$\{p_\infty, q_\infty\}$, and so does not has  finite total curvature.

\smallskip

A second example is given by  a  one parameter family of entire horizontal
graphs with respect to a geodesic $\ga$ of $\hi 2 \times \{0\}$, that is,  each
one is a horizontal graph (see Definition \ref{D.horizontal graph}) over an
entire vertical plane orthogonal to $\gamma.$
In fact, each graph is stable and is invariant by hyperbolic screw motions,
hence  it has infinite total curvature.
Moreover, it is also an entire vertical graph,
 given by a simple explicit formula taking the half-plane model for $\hi 2$
\cite[equation (4), section 4]{Sa} or \cite[example 2, section 3]{SE-T}.
For each surface, the intersection with any slice is a geodesic
of $\hi 2$, which depends on the slice. Therefore, the asymptotic boundary of
each surface is constituted of two analytic entire symmetric curves,
each curve is a "exp type" graph
over a vertical line (Figure \ref{Fig-Intro}). We deduce that the asymptotic
boundary is not contained in two vertical lines.
 Thus, each surface is a minimaly embedded plane in $\hi 2 \times \R$, which is
stable and has infinite total curvature. This shows that the hypothesis about
the asymptotic boundary in Theorem \ref{Main Theorem} cannot be removed.

\begin{figure}[!h]
\centerline{\includegraphics[scale=0.25]{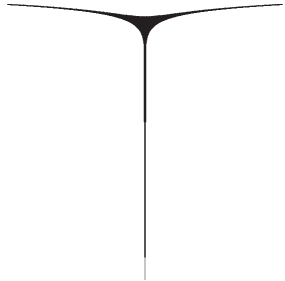}}
\caption{\em Entire minimal horizontal graph with infinite total
curvature  \newpage $  $
  \hskip40mm
$($view from the outside of $\hi2 \times \r)$}
\label{Fig-Intro}
\end{figure}

Another  example is given by an end of a catenoid.
Of course, a slice
$\hi2 \times\{t\}$ is a trivial example.

\smallskip

We observe that there is  another notion of horizontal graph
in $\hi 2 \times \R$ that appears in the literature \cite{Sa}.

\smallskip

We point out that as an immediate  consequence  of our main theorem, we get
an extension (Corollary \ref{C.model}) of the Schoen
type theorem proved by
L. Hauswirth, B. Nelli and the authors \cite{HNST}. We
remark that to
accomplish this task, we
use some results established in \cite{HNST}.  A crucial property  shown in
\cite{HNST}  is that
the horizontal sections of finite total curvature ends converge towards a
horizontal geodesic of $\hi 2\times \r$.

\subsection*{Acknowledgements}
{\Small The first author
wishes to thank {\em Laboratoire} {\em G\'eom\'etrie et Dynamique de l'Institut
de
Math\'ematiques de Jussieu} for the kind hospitality.  The second author wishes
to thank
{\em Departamento de Matem\'atica da
PUC-Rio} for the kind hospitality.}

\section{main theorem}\label{Sec.End}

First, we need to fix some definitions and terminologies.

We choose the Poincar\'e disk model for the hyperbolic plane $\hi2$.

We identify $\hi2$ with $\hi2 \times \{0\}$. We define a
{\em vertical plane} in $\hi2 \times \r$ to be a product $\gamma \times \r$, where
$\gamma\subset \hi2$ is a complete geodesic line.

\begin{definition}[Vertical planar end]
 We say that $L\subset \partial_{\infty}(\h^2\times\r)$ is a
 {\em vertical line}
if $L=\{p_\infty\}\times\r$ for some $p_\infty\in\partial_{\infty}\h^2$.

We say that a complete  minimal surface $E$ immersed in
$\h^2\times\r$ with
compact boundary is {\em a vertical planar end},  if the
surface is
an oriented properly immersed annulus whose  asymptotic boundary is
contained
in two distinct
vertical lines $L_1$ and $L_2$. Precisely, there is a vertical plane
$P\subset \hi2 \times \r$ such that
$\pain E \cap (\pain \hi2 \times \r )\subset \pain P$.
 \end{definition}

\begin{definition}
 Let $S\subset \h^2\times\r$ be a surface and let
 $P=\gamma \times \r$ be a vertical plane.

 For  any positive real number $\rho$,
we denote by
$L_\rho^+,L_\rho^-\subset\hi2$ the
two equidistant lines of $\gamma$ at distance $\rho$. Let
$Z_\rho$ be the component of
$(\hi2\times \r)\setminus(L_\rho^+ \cup L_\rho^-)\times\r$
containing $P$.

We say that $S$ {\em converges
to the vertical plane $P$} if the following two properties hold:
\begin{enumerate}
\item For any $\rho >0$ there is a compact part $K_\rho$ of
$S$  such that
$ S\setminus K_\rho \subset Z_\rho$.

\item $\pain S =\pain P$.
\end{enumerate}
\smallskip

 We observe that this definition and the definition of {\em asymptotic to the vertical plane
$P$} given in \cite[Definition 3.1]{HNST} are distinct. But, under geometric
assumptions they lead to the same conclusion. Compare Corollary \ref{C.model}
below with \cite[Theorem 3.1]{HNST}.
\end{definition}

We  recall now the definition of {\em horizontal graph with respect to
a geodesic} given in \cite[Definition 3.2]{HNST}.

\begin{definition}\label{D.horizontal graph}
Let $\gamma \subset \hi2$ be a geodesic. We say that a nonempty set
$S \subset \hi2 \times \R$ is a {\em horizontal graph with respect to
the geodesic
$\gamma$}, or simply a {\em horizontal graph}, if for any equidistant line
$\wt \gamma$ of $\gamma$ and for any $t \in \R$, the curve $\wt \gamma \times
\{t\}$
intersects $S$ at most in one point.\end{definition}
\bigskip

We state now precisely our main theorem:

\begin{theo}\label{Main Theorem}
{\em  Let $E$ be a minimal stable vertical planar end in
$\hi2\times\r$ and let $P$ be the vertical plane such that
$\pain E \cap (\pain \hi2 \times \r)\subset \pain P$.

Then, $E$  has finite total curvature and converges to the vertical plane $P$.
Furthermore if $E$ is embedded then,
up to a compact part, $E$ is a horizontal graph.  }
\end{theo}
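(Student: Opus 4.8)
The strategy is to split the conclusion into three parts: (i) finite total curvature; (ii) convergence to the vertical plane $P$; (iii) the graph property in the embedded case. For (i), the key is a curvature estimate argument exploiting stability. Since $E$ is a stable minimal end, we use the standard fact (à la Schoen, adapted to $\hi2\times\r$) that stability yields a pointwise curvature bound in terms of distance to the boundary: there is a constant $C$ such that $|K(p)| \le C/\dist_E(p,\partial E)^2$ for $p\in E$. I would combine this with a control of the ambient geometry along the end. The asymptotic boundary being contained in $\pain P = L_1\cup L_2$ (two vertical lines over the endpoints $p_\infty,q_\infty$ of the geodesic $\gamma$) forces the end, for large intrinsic distance, to be confined near the slab region over the geodesic segment; one then shows the intrinsic distance grows linearly while the area of the relevant pieces of $E$ grows at most linearly in the "height" parameter, so that $\int_E |K|\,dA$ is controlled by a convergent sum/integral. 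Concretely, I would exhaust $E$ by compact pieces $E_n$ between heights $n$ and $n+1$ (using the $t$-coordinate of $\hi2\times\r$), bound $\int_{E_n}|K|\,dA$ using the curvature estimate together with a diameter/area bound for $E_n$, and sum.

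For (ii), once finite total curvature is known, I would invoke the asymptotic analysis: a complete minimal end of finite total curvature in $\hi2\times\r$ is, near infinity, well-approximated by a multigraph and, by the results of \cite{HNST}, its horizontal sections $E\cap(\hi2\times\{t\})$ converge to a horizontal geodesic of $\hi2\times\{t\}$ as one goes to infinity along the end. Since $\pain E\cap(\pain\hi2\times\r)\subset\pain P = L_1\cup L_2$, the limiting horizontal geodesic at each height must have the same two endpoints $p_\infty,q_\infty$ as $\gamma$, hence equals $\gamma\times\{t\}$. This gives both conditions in the definition of convergence: the containment $E\setminus K_\rho\subset Z_\rho$ follows because the sections are eventually uniformly close to $\gamma\times\{t\}$, and $\pain E = \pain P$ follows since the sections fill out $\gamma$ asymptotically and the asymptotic boundary is already confined to $\pain P$.

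For (iii), assuming $E$ embedded, I would upgrade the "close to $\gamma\times\{t\}$ in each slice" statement to a genuine graph statement. The finite-total-curvature structure at infinity gives, for each height $t$, that $E\cap(\hi2\times\{t\})$ is, outside a compact set, a single embedded curve $C^1$-close to $\gamma\times\{t\}$; being $C^1$-close to the geodesic and embedded, it meets each equidistant line $\wt\gamma$ of $\gamma$ at most once, which is exactly \defref{D.horizontal graph}. Putting the heights together, $E$ minus a compact part is a horizontal graph with respect to $\gamma$.

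The main obstacle I expect is part (i): making the curvature-estimate-plus-area-bound argument rigorous requires careful control of how the end sits in the ambient space — in particular ruling out that the end "spirals" or accumulates so that the intrinsic distance to the boundary fails to grow, and getting an honest linear (or subquadratic) area bound for the height slabs $E_n$ from the confinement near $\gamma\times\r$. This is where one must use that the asymptotic boundary lies in exactly two vertical lines (not, e.g., an arc together with vertical half-lines, which is precisely the configuration the introduction singles out as having infinite total curvature). The convergence and graph statements (ii)–(iii) are then comparatively routine given the machinery of \cite{HNST}.
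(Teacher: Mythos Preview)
Your plan for (ii) and (iii) is broadly aligned with the paper's use of the machinery from \cite{HNST}, but part (i) has a genuine gap. The pointwise estimate you invoke, $|K(p)|\le C/\dist_E(p,\partial E)^2$, does not follow from stability in $\hi2\times\r$. What the Schoen--type estimate (here \cite[Main Theorem]{RST}) gives is a bound on the second fundamental form, $|A|^2\le C/r^2$. By the Gauss equation for a minimal surface in $\hi2\times\r$ one has $K=\bar K(T_pE)-\tfrac12|A|^2$ with $\bar K(T_pE)=-n_3^2$, so $|K|=n_3^2+\tfrac12|A|^2$. The ambient term $n_3^2$ is \emph{not} controlled by stability alone; showing $n_3\to 0$ is itself one of the hard steps, and even once it is known one has no rate, so no decay of $|K|$ of order $1/r^2$. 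Consequently your slab summation $\sum_n\int_{E_n}|K|\,dA$ cannot be closed in the way you describe. (There is a second, independent difficulty: exhausting by height slabs ignores the parts of the end that escape to $p_\infty$ or $q_\infty$ at bounded height, and the uniform area bound you need on $E_n$ is unjustified without already knowing some graph/confinement property.)

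The paper follows a different route for (i). First, the maximum principle with vertical planes and the barriers $M_\rho$ of \cite[Proposition~2.1]{SE-T} forces the confinement $E\setminus K_\rho\subset Z_\rho$ (your convergence condition (1) is thus obtained \emph{before} finite total curvature, not after). Second, stability is used locally, not through a global curvature integral: the bound $|A|\le C$ (from \cite{RST}) together with confinement in arbitrarily thin wedges yields a contradiction unless $n_3\to 0$ uniformly at infinity. Third, $n_3\to 0$ implies that the flat metric $|\phi|\,|dz|^2$ (with $\phi=-h_z^2$) is complete on the parameter domain, and Osserman's lemmas give that the end is conformally a punctured disk and $\phi$ extends meromorphically, $\phi(z)=\big(\sum_{k\ge1}a_{-k}z^{-k}+P(z)\big)^2$. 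Finally, finite total curvature is obtained by Gauss--Bonnet: one builds Jordan curves $\Gamma(C)$ in the $w=\int\sqrt{\phi}\,dz$ coordinates on which the geodesic curvature integral tends to $0$ as $C\to\infty$, using the exponential decay of $|\nabla\tilde\omega|$ from \cite{HNST}. None of this is a refinement of your slab--summation scheme; it is a different mechanism, and the step $n_3\to 0$ (which your estimate presupposes in disguise) is where the stability hypothesis is actually spent.
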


\bigskip

There are many examples
of complete, possibly with compact boundary,
minimal surfaces whose asymptotic boundary is contained in the union of
vertical lines or copies of the asymptotic boundary of $\hi2$.
For instance, we refer to the first paper on this subject written by
B. Nelli and H. Rosenberg \cite{NR}. We remark that the first paper about
minimal ends of finite total curvature in $\hi2 \times \r$ was carried
out by L. Hauswirth and H. Rosenberg \cite{HR}.

\smallskip

We recall now that F. Morabito and M. Rodriguez \cite{MR} and J. Pyo \cite{P}
have constructed,
independently, a family of minimal embedded annuli with
finite total curvature. Each end of such annuli is  asymptotic to  a vertical
geodesic plane. In \cite{HNST} we  called  each of such  surfaces   a {\em two
ends model surface}.  The following corollary extends the main theorem of
\cite{HNST}.

\begin{coro}\label{C.model}
 An oriented complete and connected minimal surface immersed in $\hi2\times\r$
with two
distinct embedded annular ends, each one being stable and the asymptotic
boundary of
each end being contained in the asymptotic boundary of a vertical geodesic
plane, is a  two ends model surface.
\end{coro}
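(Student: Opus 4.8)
The plan is to reduce the statement to the main theorem of \cite{HNST}, using Theorem \ref{Main Theorem} to convert the stability hypothesis on each end into the finite total curvature and asymptotic behaviour required there.

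Let $S$ denote the surface and $E_1,E_2$ its two embedded annular ends. After discarding a compact piece, each $E_i$ is an oriented properly immersed minimal annulus with compact boundary whose asymptotic boundary is contained in the two vertical lines making up $\pain(\gamma_i\times\r)$, where $\gamma_i\times\r$ is the vertical geodesic plane $P_i$ attached to $E_i$. Thus each $E_i$ is a minimal stable vertical planar end in the sense of the Definition above, with $\pain E_i\cap(\pain\hi2\times\r)\subset\pain P_i$. Applying Theorem \ref{Main Theorem} to $E_1$ and to $E_2$, we get that each $E_i$ has finite total curvature, converges to $P_i$ (in particular $\pain E_i=\pain P_i$), and, being embedded, agrees up to a compact part with a horizontal graph with respect to $\gamma_i$. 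Since $S\setminus(E_1\cup E_2)$ is compact it has finite total curvature, so $S$ is a complete, connected minimal surface of finite total curvature.

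Next I would verify that each $E_i$ is asymptotic to $P_i$ in the sense of \cite[Definition 3.1]{HNST}, so that the hypotheses of \cite[Theorem 3.1]{HNST} are satisfied; this is the point where the two a priori different notions of approaching a vertical plane (compare the remark following the definition of ``converges to the vertical plane'') must be reconciled. By the crucial property established in \cite{HNST}, for a finite total curvature end the horizontal sections $E_i\cap(\hi2\times\{t\})$ converge, as they leave the compact part, to a horizontal geodesic $g_t$ of $\hi2\times\{t\}$. On the other hand, property (1) in the definition of ``$E_i$ converges to $P_i$'' forces, for every $\rho>0$, the sections of $E_i$ outside a compact set to lie in $Z_\rho\cap(\hi2\times\{t\})$, the slab between the two equidistants of $\gamma_i$ at distance $\rho$. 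Hence $g_t\subset\bigcap_{\rho>0}\overline{Z_\rho}\cap(\hi2\times\{t\})=\gamma_i\times\{t\}$, and since a geodesic contained in a geodesic coincides with it, $g_t=\gamma_i\times\{t\}$. Together with $\pain E_i=\pain P_i$ this says precisely that $E_i$ is asymptotic to the vertical geodesic plane $P_i$ in the sense of \cite{HNST}.

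Finally, $S$ is an oriented complete connected minimal surface of finite total curvature with two ends, each asymptotic to a vertical geodesic plane, so the main theorem of \cite{HNST}, namely \cite[Theorem 3.1]{HNST}, applies and yields that $S$ is a two ends model surface. I expect the main obstacle to be the reconciliation carried out in the third paragraph: one cannot deduce the HNST notion of ``asymptotic'' formally from the definition of ``converges to'', and must genuinely invoke the convergence of the horizontal sections of a finite total curvature end in order to identify the limiting geodesic with $\gamma_i$.
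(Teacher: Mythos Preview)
Your proposal is correct and follows exactly the route the paper indicates: the paper does not give a detailed proof of the corollary, stating only in the introduction that it is ``an immediate consequence of our main theorem'' together with results from \cite{HNST}, and explicitly flagging the horizontal-section convergence property from \cite{HNST} as the crucial ingredient. Your write-up simply unpacks this sketch --- applying Theorem~\ref{Main Theorem} to each end, deducing finite total curvature of $S$, reconciling ``converges to $P_i$'' with ``asymptotic to $P_i$'' via the horizontal-section convergence, and then invoking \cite[Theorem 3.1]{HNST} --- which is precisely what the authors intend.
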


\bigskip

\noindent {\em Proof of Theorem \ref{Main Theorem}.}

We have  $P=\gamma\times\r$, where $\gamma$ is a geodesic of $\hi2$.
We set $\pain \gamma=\{p_\infty, q_\infty\}$.

Given any isometry $T$ of
$\hi2$ we denote also by $T$ the isometry of
$\hi2 \times \r$ induced by $T$: $(x,t)\mapsto (T(x),t)$.

For any geodesic $\alpha\subset\hi2$ we set $P_\alpha:=\alpha \times\r$, that
is
$P_\alpha$ is the vertical plane containing $\alpha$.

\bigskip

We will proceed the proof of Theorem \ref{Main Theorem} in several steps.

\smallskip
\step \label{halfspace} {\em Let $\alpha\subset\hi2$ be any geodesic such
that
$\alpha \cap \gamma= \emptyset$. If a component of
$(\hi2\times\r)\setminus P_\alpha$, say $P_\alpha^+$,
contains $P\cup \partial E$ then $E\subset P_\alpha^+$.}

\smallskip

This is a consequence of the maximum principle established by the authors and
B. Nelli  \cite[Theorem 3.1]{NST}.

\bigskip

\step \label{asymptotic} {\em
For any $\rho >0$ there is a compact part $K_\rho$ of
$E$  such that
$ E\setminus K_\rho \subset Z_\rho$. Thus,
$\pain E \subset \pain P$. }

\medskip

\noindent {\em Proof of Step \ref{asymptotic}.} Let $\rho$ be a positive
real number. Assume that $E\setminus Z_\rho$
is not compact,
then there is an unbounded sequence $p_n=(x_n,t_n) \in  E\setminus Z_\rho$.
We can assume that the sequence $(p_n)$ belongs to the component
of $(\hi2 \times \r)\setminus (L_\rho^+ \times \r)$ which does not contain $P$.

\begin{figure}[!h]
\hspace{-8em}
\includegraphics[scale=0.9]{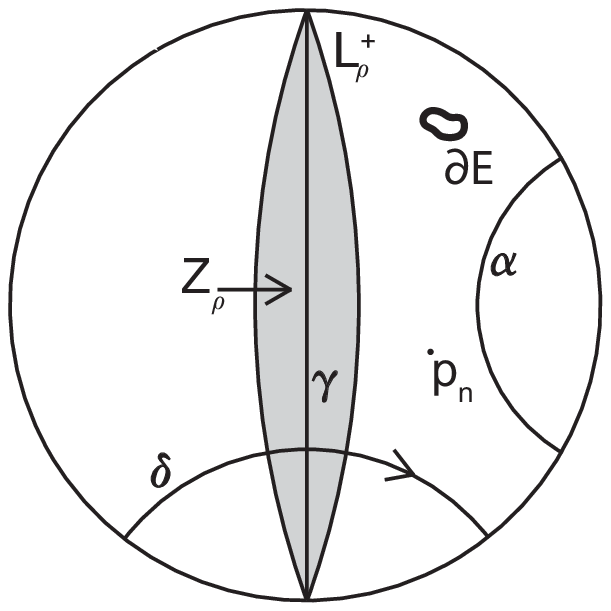}
\caption{}
\label{Fig-Step 2}
\end{figure}

Let
$\alpha \subset \hi2$ be any geodesic such that
$ \alpha \cap  \gamma = \emptyset$ and
such that $\partial E$ and $P$ belong to the same component of
 $(\hi2 \times \r) \setminus P_\alpha$ (Figure \ref{Fig-Step 2}). We denote by
$P_\alpha^-$ the other
component of $(\hi2 \times \r) \setminus P_\alpha$.
We deduce from Step \ref{halfspace} that
 $P_\alpha^-\cap E=\emptyset$. This implies that no subsequence  of $(x_n)$
 converges  either to $p_\infty$ or to $q_\infty.$ Indeed, if a subsequence of
$(x_n)$ would converge to $p_\infty,$ we
could choose the  geodesic $\alpha$ such that $p_\infty\in \pain \alpha,$
contradicting  $P_\alpha^-\cap E=\emptyset.$ The same argument shows that there
is no subsequence of $(x_n)$ that converges
to other points of $\pain \hi 2.$

Then, we deduce from above and from  the assumptions about the asymptotic boundary of
$E$ that
   $(x_n)$ is a
bounded sequence in $\hi2$. Thus, up to extract a subsequence of $(p_n)$ and up to  a
reflection, we may assume that $t_n \to +\infty$.

Now we consider the family of complete minimal surfaces
$M_d$, $d>1$,
described
in \cite[Proposition 2.1-(1)]{SE-T} and in the
proof of \cite[Theorem 3.1]{NST}. Recall that $M_d$ contains the equidistant
line $L_\rho^+ $
staying at the distance $\rho= \cosh^{-1}(d)$ from $\gamma$
and  that $M_d$ is contained in the closure of the non mean
convex component
of   $(\hi2 \times \r)\setminus (L_\rho^+ \times \r)$.
By abuse of notation,
this surface is denoted
$M_\rho$. The proof of the assertion follows from the maximum
principle again \cite[Theorem 3.1]{NST}, using
the surfaces $M_\rho$. We give a short proof in the sequel for the readers
convenience.

Let $V_\rho$ be the closure of the component of
$(\hi2\times \r) \setminus M_\rho$ which does not contain $P$. Observe that the
height function is bounded on $V_\rho$. From the considerations above, there
exists $t>0$ such that
\begin{itemize}
 \item $\partial E \cap \big(V_\rho + (0,0,t)\big) =\emptyset$,
 \item $E \cap \big(V_\rho + (0,0,t)\big) \not=\emptyset$.
\end{itemize}

Let $\delta\subset \hi2$ be a geodesic orthogonal to $\gamma$. Using that $E$
is properly
immersed, moving horizontally $M_\rho +(0,0,t)$  along $\delta$ and considering
horizontal translated copies of $M_\rho +(0,0,t),$  we must find  a
last contact at an interior point of $E$ and a copy of  $M_\rho +(0,0,t)$. This
yields a
contradiction with the maximum principle. \qed

\bigskip

Let $p_0 \in \hi2 \times \r$ be a fixed point. For any $r >0$ we denote by
$B_{r}$ the open geodesic ball in $\hi2 \times \r$ centered at $p_0$ with
radius $r$.

\step \label{vertical} {\em Let $n_3$ be  the third coordinate of the
unit normal field on $E$ with respect to the product metric
on $\hi2 \times \r$.
We have that $n_3 (p) \to 0$ uniformly when $p \to
\pain E$.

More
precisely, for any $\varepsilon >0$, there exists $\rho>0$  such that
for any
$p \in (E\cap Z_\rho)\setminus B_{1/\rho}$, we have
$|n_3(p)|<\varepsilon$. }

\medskip

\noindent {\em Proof of Step \ref{vertical}.}
Assume by contradiction that the assertion does not hold.  Therefore
there exist $\varepsilon >0$ and a sequence of points
$p_n:=(x_n,t_n) \in E\cap Z_{1/n}$, $n\in \Ne$, such that
$| n_3(p_n)|>\varepsilon$ and the sequence $(p_n)$ is not bounded in
$\hi2\times \r$.

 Up to extract a subsequence, we can assume that the sequence
 $(p_n)$ converges in $(\hi2\times \r) \cup \pain (\hi2\times \r)$, eventually
with $\lim x_n\in\pain\hi2$ and $\lim t_n =+ \infty$ (up to a
vertical reflection).

\medskip

\noindent {\bf First case:} Suppose that $\lim x_n\in\pain\hi2$, thus we have
$\lim x_n\in\pain\gamma$. Without loss of generality, we can assume that
$x_n\to p_\infty$.

We consider on the geodesic $\gamma$ the orientation given by
$p_\infty \to q_\infty$.
We choose two points $y_1, y_2 \in \gamma$ such that $y_1< y_2$. Let
$C_i\subset \hi2$ be the geodesic orthogonal to $\gamma$ through $y_i$,
$i=1,2$, (Figure \ref{Fig-Step 3a}).
\begin{figure}[!h]
 \vspace{-15mm}
\centerline{
\subfigure[\hspace{-6em}]
{
\includegraphics[scale=0.9]{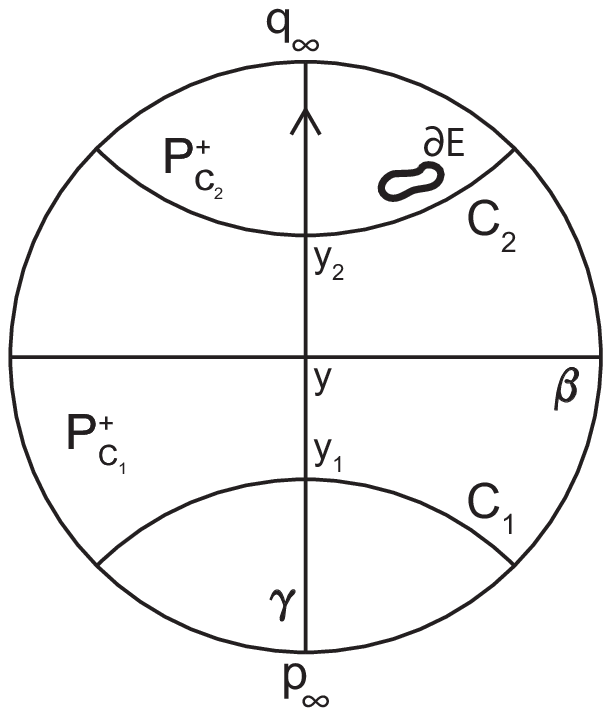}
\label{Fig-Step 3a}
} \hspace{-10.em}
\subfigure[\hspace{-5em}]{
\includegraphics[scale=0.9]{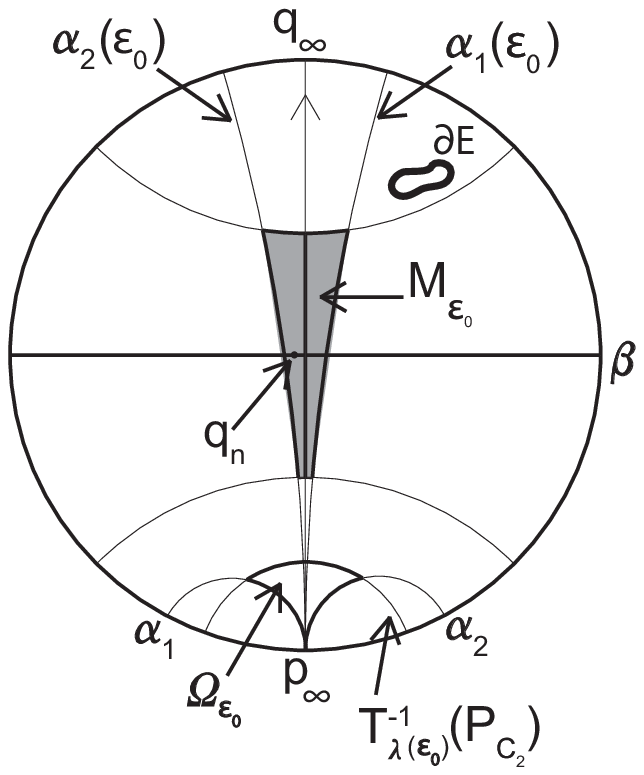}
\label{Fig-Step 3b}
}
}
\caption{}
\end{figure}

Let $P_{C_i}^ +$ be the connected component
of $(\hi2 \times\r)\setminus P_{C_i}$ containing $q_\infty$ in its
asymptotic boundary, we
denote by  $P_{C_i}^ -$ the other component. Since $E$ has compact boundary,
we can choose $y_1$ et $y_2$ so that
$\partial E\subset P_{C_2}^ +$. Let $y\in \gamma$ be the midpoint of the
geodesic segment $[y_1,y_2]$ of $\gamma$ and let
$\beta\subset \hi2$ be the geodesic orthogonal to $\gamma$ through $y$.
Observe that the  vertical planes $P_{C_1}$ and
$P_{C_2}$ are symmetric with respect to $P_\beta$.

\medskip

Let $\alpha_1\subset \hi2$ be a geodesic so that
$p_\infty\in \pain \alpha_1$, $\pain \alpha_1 \cap \pain C_1=\emptyset$ and
$\alpha_1 \cap  C_1=\emptyset$, thus
$\alpha_1 \subset P_{C_1}^ - $, (Figure \ref{Fig-Step 3b}).

We denote by $\alpha_2\subset \hi2$ the symmetric of $\alpha_1$ with
respect to $\gamma$. Using the Step \ref{halfspace}, we can deduce
 that the component of
 $(\hi2 \times\r)\setminus (P_{\alpha_1}\cup P_{\alpha_2})$
containing $P$ also contains $E \cup \partial E$.

\medskip

Let $\varepsilon_0>0$. For any $\lambda>0$ we denote by $T_\lambda$ the
hyperbolic translation of length $\lambda$
along $\gamma$, with the orientation $p_\infty \to q_\infty$.
 There exists $\lambda (\varepsilon_0)>0$ so that the
geodesics $T_{\lambda (\varepsilon_0)}(\alpha_1)$
and $T_{\lambda (\varepsilon_0)}(\alpha_2)$ belong to a
$\varepsilon_0$-neighborhood of
 $\gamma$ in $\hi2 \cup \pain\hi2$, in the Euclidean meaning.
We set
$T_{\lambda (\varepsilon_0)}(\alpha_i):=
 \alpha_i(\varepsilon_0)$, $i=1,2$. We remark that
the vertical planes $P_{\alpha_i(\varepsilon_0)}$ belong to
 a $\varepsilon_0$-neighborhood  of $P_\gamma$ (in the Euclidean meaning).

\medskip

Let $U_{\varepsilon_0}$ be the component of
$(\hi2\times\r)\setminus (P_{\alpha_1(\varepsilon_0)}\cup
P_{\alpha_2(\varepsilon_0)})$ containing $P$.
We denote by $M_{\varepsilon_0}$ the component of
$(\hi2\times\r)\setminus (P_{\alpha_1(\varepsilon_0)}\cup
P_{\alpha_2(\varepsilon_0)}
\cup P_{C_1} \cup  P_{C_2})\times \r$ containing   $\{y\}\times \r$.
We have therefore
\begin{equation*}
 M_{\varepsilon_0}= U_{\varepsilon_0}\cap  P_{C_1}^+ \cap  P_{C_2}^- .
\end{equation*}

\medskip

Let $\Omega_{\varepsilon_0}$ be the component of
$(\hi2\times\r)\setminus (T_{\lambda (\varepsilon_0)}^{-1}(P_{C_2}) \cup
P_{\alpha_1} \cup P_{\alpha_2} )$
such that $\Omega_{\varepsilon_0}\cap P\not=\emptyset$.
Then, by construction, for any
$\lambda \geq \lambda (\varepsilon_0)$ and for any
$p\in \Omega_{\varepsilon_0}$, we have
$T_\lambda (p) \in U_{\varepsilon_0}$.

\medskip

We may assume that
$x_n\in \Omega_{\varepsilon_0}$ for any $n$.

\medskip

For any $n\in \Ne$, there exists an unique $\lambda_n >0$
such that $T_{\lambda_n}(x_n)\in \beta$. Therefore,
setting $q_n:= T_{\lambda_n}(p_n)$, we have
$q_n\in P_\beta$.

For $n$ large enough, say $n>n_0 >0$, we have
$\lambda_n >\lambda (\varepsilon_0)$ (since $ x_n\rightarrow p_\infty$),
therefore
$q_n\in P_\beta \cap U_{\varepsilon_0}$, that is
$q_n\in M_{\varepsilon_0}$.

\medskip

For any $n>n_0$, we denote by $E_n (\varepsilon_0)$ the connected
component of
$T_{\lambda_n}(E) \cap M_{\varepsilon_0}$ containing $q_n$. By
construction,  $E_n (\varepsilon_0)$ is the component of
$T_{\lambda_n}(E\cap \Omega_{\varepsilon_0}) \cap M_{\varepsilon_0}$
containing $q_n$.
Consequently, the boundary of $E_n (\varepsilon_0)$ belongs to
$ P_{C_1} \cup  P_{C_2}$ and has no intersection with
$P_{\alpha_1(\varepsilon_0)}\cup
P_{\alpha_2(\varepsilon_0)}$:
\begin{equation}\label{Propriete-bord}
 \partial E_n (\varepsilon_0) \subset  P_{C_1} \cup  P_{C_2} \quad
\text{and}\quad
 \partial E_n (\varepsilon_0) \cap (P_{\alpha_1(\varepsilon_0)}\cup
P_{\alpha_2(\varepsilon_0)})=\emptyset.
\end{equation}

\medskip

Let $\ov {d_n}(\cdot,\cdot)$ be the intrinsic distance on
$T_{\lambda_n}(E)$.
By construction, for $n$ large enough, say
$n>n_1 >n_0$, we have
$\ov {d_n}\bigl(p,  \partial T_{\lambda_n}(E)\bigr) > \pi/2$
for any  $p\in E_n (\varepsilon_0)$.
Let $| \ov A_n |$ be the norm of the
second fundamental form of $T_{\lambda_n}(E)$.

Observe that the sectional curvature of $\hi2 \times \r$ is bounded
(in absolute value it is bounded by $1$).

Since the end $E$ is stable, the translated copy $T_{\lambda_n}(E)$ is also
stable for any $n>n_1$. Using the fact that the distances between $E_n
(\varepsilon_0)$ and the boundary of $T_{\lambda_n}(E)$
are uniformly bounded from below,
we deduce from
\cite[Main Theorem]{RST} that there exists a constant
$C^\prime >0$,
which does not depend on $n> n_1$ and neither on  $\varepsilon_0>0$, such that
\begin{equation}
 | \ov A_n (p)| < C^\prime
\end{equation}
for any  $p\in E_n (\varepsilon_0)$.

Furthermore, since the boundary of $E_n(\varepsilon_0)$ belongs to
$P_{C_1}\cup P_{C_2}$, there exists
a constant $C^{\prime  \prime }>0$, which does not depend on
$n$ and neither on $\varepsilon_0$, such that
\begin{equation*}
 \ov{d_n} (q_n, \partial E_n(\varepsilon_0))>C^{\prime  \prime }.
\end{equation*}
\smallskip
Now, we consider $\hi 2\times \r$ as an open set of Euclidean space $\r^3$, as well.
We deduce from  \cite[Proposition 2.3]{RST} and from Proposition
\ref{comparaison} in the Appendix, that there exists a real number
$\delta >0$, which does not depend on $n$ and neither on $\varepsilon_0$,
such that for any
$n>n_1$, a part $F_n$ of $E_n(\varepsilon_0)$ is the Euclidean graph
of a function defined on the
disk centered at point $q_n$ with Euclidean radius $\delta$ in the
tangent plane of  $E_n$ at $q_n$. Furthermore, the norm of the Euclidean gradient of this
function is bounded by $1$.

\smallskip

Let  $\nu$ be the unitary normal along  $E$ in the Euclidean metric.
We denote by  $\nu_3$ the vertical component of $\nu$. Recall that
$|n_3 (p_n)|>\varepsilon$, hence $|n_3 (q_n)|>\varepsilon$ for any $n$.
Comparing the product metric  of $\hi2 \times \r$ with the Euclidean metric,
it can be shown that there exists $\varepsilon^\prime >0$, which does not
depend on $n$, such that $|\nu_3 (q_n)|>\varepsilon^\prime$ for any $n>n_1$,
(see the formula of the unit normal vector field of a vertical graph in  the
proof of \cite[Proposition 3.2]{ST4}).
This implies that the tangent planes of $E_n$ at points $q_n$ have a slope
bounded below  uniformly (with respect to $n>n_1$).

Since the radius $\delta$ does not depend on $\varepsilon_0$, if we choose
$\varepsilon_0$  small enough, the Euclidean graph
$F_n$ will have nonempty intersection with
$P_{\alpha_1(\varepsilon_0)}$
and  $P_{\alpha_2(\varepsilon_0)}$, which is not possible.

\bigskip

\bigskip

\noindent {\bf Second case:} The sequence $(x_n)$ has a finite limit in $\hi2$.

Therefore, since the sequence $(p_n)$ is not bounded, up to considering a
subsequence, and up to a vertical reflection, we can assume that
$t_n \to +\infty$.
We saw in Step
\ref{asymptotic} that
for any $\rho>0$ there exists
 $t_\rho >0$ such that $E\cap \{ | t|>t_\rho\} \subset Z_\rho$.
 Then we can argue as in the first case replacing the vertical planes
$P_{\alpha_1(\varepsilon_0)}$ and
$P_{\alpha_2(\varepsilon_0)}$ by the surfaces
 $L_\rho^+\times \r$ and
$L_\rho^- \times \r$, recalling that
$L_\rho^+ \cup L_\rho^- =\partial Z_\rho$.

\medskip

Conjugating the two cases above we infer that
$n_3(p)\to 0,$
uniformly when
$p\to \pain E$. \qed

\vskip4mm

\noindent {\bf Notation.} The end is conformally parametrized  by
$U_R :=\{ z \in \C \mid 1\leq |z| < R\}$, for some $R>1$. The conformal,
complete and proper immersion
$X:U_R \rightarrow \hi2 \times \r$ is given by $X=(F,h)$ where
$F: U_R\rightarrow \hi2$ is a harmonic map and $h: U_R\rightarrow \r$ is a
harmonic function. Let $\sigma$ be the conformal factor of the hyperbolic
metric on $\hi2$, we set
$\phi :=(\sigma \circ F)^2 F_z \ov F _z  $. Since the immersion $X$ is
conformal,
we have $\phi = -h_z^2$, \cite[Proposition 1]{ST} and therefore, $\phi$ is
holomorphic.

 As above, we denote by  $n_3$  the third coordinate of the unit normal
field on $E$, with respect to the product metric. We define a
function
$\omega$ on $E$, or $U_R$, setting $n_3=\tanh \omega$. The induced metric on
$U_R$ is, \cite[Equation 14]{HST}:
\begin{equation}\label{metric}
 ds^2=4 \ch^2 (\omega)\, |\phi |\, |dz|^2.
\end{equation}

\bigskip

\step \label{end}
 {\em We have $R=\infty$, that is the end is conformally equivalent to a
punctured disk. Moreover $\phi$ extends meromorphically up to the end and
has the following expression on
$U  :=\{ z \in \C \mid 1\leq |z| \}$:
\begin{equation}\label{Formule-phi}
 \phi (z)=(\sum_{k\geq 1}\frac{a_{-k}}{z^k} +P(z))^2,
\end{equation}
where $P$ is a polynomial function.}

\medskip

\noindent {\em Proof of Step \ref{end}.}
From the expression of the metric $ds^2$, we deduce that
if $z_0\in U_R$ is a zero of $\phi$, then  $\omega$
must have a pole at $z_0$ and, therefore, $n_3 (z_0)=\pm 1$.
On the other hand, we infer from
Step \ref{vertical} that  $n_3(z)\to 0$ uniformly when
$|z| \to R$, by properness. Therefore $\omega (z) \to 0$ when
$|z| \to R$.
Consequently,  we may assume that
$\phi$ does not vanish on $U_R$.

Since the metric $ds^2$ is complete and $\omega (z) \to 0$
uniformly when $|z| \to R$, the new metric given by $ |\phi |\,|dz|^2$ is
complete too.

Since  $\phi$ is
 holomorphic, a result of Osserman
shows that $R=\infty$,
\cite[Lemma 9.3]{Osserman}. Thus, $ |\phi |\, |dz|^2$ is a complete metric on
$U $. Furthermore, using the fact that $\phi$ does not vanish,
another result of Osserman shows that $\phi$ has at most a pole at
infinity, \cite[Lemma 9.6]{Osserman}.

At last, recall that  $\phi$ is the square of a holomorphic function :
$\phi(z)=-(h_z (z))^2$, \cite[Proposition 1]{ST}.
This shows that $\phi$ has the required form. \qed

\bigskip

From now on we assume that $\phi$ has no zero on $U $.

\bigskip

\step \label{finite curvature}{\em The end $E$ has finite total curvature.}

\medskip

We first show the following result.

\begin{lemma}\label{L.nonzero}
  The polynomial function $P$ is not identically zero.
\end{lemma}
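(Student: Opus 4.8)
The plan is to argue by contradiction: suppose $P\equiv 0$, so that $\phi(z)=\bigl(\sum_{k\ge 1}a_{-k}z^{-k}\bigr)^2$, which means $\phi$ is holomorphic at infinity and in fact $\phi(z)\to 0$ as $|z|\to\infty$. First I would examine what this forces on the induced metric. From \eqref{metric}, $ds^2=4\ch^2(\omega)\,|\phi|\,|dz|^2$, and since $n_3\to 0$ at the end we have $\omega\to 0$, so near infinity $ds^2$ is comparable to $|\phi|\,|dz|^2$. If $\phi$ vanishes at infinity at order $2m\ge 2$, then $|\phi|\,|dz|^2\sim |z|^{-2m}|dz|^2$, and a direct computation of the distance from a fixed circle $\{|z|=r_0\}$ out to infinity gives $\int^{\infty}r^{-m}\,dr<\infty$ when $m\ge 2$, and $\int^\infty r^{-1}\,dr=\infty$ only in the borderline case $m=1$. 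So if the leading term is $a_{-1}/z$ (i.e. $m=1$) the metric could still be complete, but if $a_{-1}=0$ the metric $|\phi|\,|dz|^2$ is incomplete at the end, contradicting the completeness established in Step \ref{end}. Hence the only surviving possibility when $P\equiv 0$ is $\phi(z)=a_{-1}^2/z^2 + (\text{higher order})$ with $a_{-1}\ne 0$.

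Next I would rule out this remaining case. Here the key is to bring in the conclusion of Step \ref{asymptotic}, namely that $E$ converges to the vertical plane $P$: for every $\rho>0$ the end eventually lies in $Z_\rho$, the slab of points within hyperbolic distance $\rho$ of $\gamma$. Equivalently, the horizontal projection $F:U\to\hi2$ has image that is asymptotically confined to an arbitrarily thin neighborhood of the geodesic $\gamma$, and moreover the two ends of $\gamma$ at infinity are the only asymptotic values. I would translate the hypothesis $\phi=-h_z^2$ together with $\phi\sim a_{-1}^2/z^2$ into asymptotics for the height function: $h_z\sim \pm i a_{-1}/z$, so $h(z)\sim \mp i a_{-1}\log z + c$, meaning $h$ grows like $\log|z|$ along the end — consistent with properness. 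Then, using the harmonic map equation for $F$ and the fact that $\phi$ is its Hopf differential, together with the metric completeness, one examines the behavior of $F$; in the regime where $\phi$ has a double pole and $\omega\to 0$, the surface behaves asymptotically like a flat cylinder, and I expect to derive that the horizontal projection cannot stay in a thin slab about $\gamma$ while also closing up as a proper annular end — or alternatively, that the total curvature would already be finite with a specific value incompatible with the geometry forced by $\partial_\infty E\subset\partial_\infty P$. Concretely, the total curvature of such an end is $\int_E|K|\,dA$, and for $\phi$ with a pole of order $2m$ at infinity the Gauss–Bonnet/Osserman-type count gives a contribution tied to $m$; the case $m=1$ corresponds to a cylinder-like end that would be asymptotic to a full slab rather than collapsing onto $P$, contradicting Step \ref{asymptotic}.

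The main obstacle, I expect, is the second case: excluding $\phi(z)=a_{-1}^2/z^2+\cdots$ with $a_{-1}\ne 0$ is genuinely geometric rather than a soft complex-analytic estimate, because the metric completeness alone does not kill it. The argument has to exploit the specific fact that $E$ is \emph{planar} — its asymptotic boundary sits inside the two vertical lines $\partial_\infty P$ — and combine this with the structure of the harmonic map $F$ whose Hopf differential is $\phi$. I would look for a contradiction by analyzing how $F$ maps the end: if $\phi$ has only a double pole at infinity and $\omega\to 0$, the pullback metric is asymptotically flat and cylindrical, forcing $F$ to converge to a point or to behave like an isometry onto a half-cylinder's worth of $\hi2$, neither of which is compatible with the image being squeezed into $Z_\rho$ for all $\rho$ while having exactly $\{p_\infty,q_\infty\}$ as asymptotic values. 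Making this precise is where the real work lies; once $P\not\equiv 0$ is forced, the proof of Step \ref{finite curvature} proceeds by the standard finite-total-curvature argument for ends with $\phi$ having a genuine pole at infinity.
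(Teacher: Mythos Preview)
Your case split matches the paper's exactly: contradiction, then separate $a_{-1}=0$ from $a_{-1}\neq 0$.

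For the case $a_{-1}=0$, your argument via incompleteness of $|\phi|\,|dz|^2$ is correct and is \emph{different} from the paper's. The paper instead observes that $\int_U |\phi|\,dA<\infty$, so (since $\omega\to 0$) the end would have finite area, and then invokes a theorem of Frensel stating that a complete stable minimal (more generally CMC) surface has infinite area. Your route is arguably cleaner here, because completeness of $|\phi|\,|dz|^2$ was already established in Step~\ref{end} and you avoid the external appeal to \cite{Frensel}.

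For the case $a_{-1}\neq 0$, however, you have a genuine gap --- and you say so yourself (``making this precise is where the real work lies''). Your heuristic that a double-pole $\phi$ forces a cylindrical end incompatible with being squeezed into every $Z_\rho$ is plausible, but nothing you wrote turns it into an argument: you have not connected the asymptotics of $\phi$ to the behaviour of the harmonic map $F$ in a way that contradicts Step~\ref{asymptotic}, and the vague appeal to ``Gauss--Bonnet/Osserman-type count'' does not produce the needed contradiction. The paper does not give a self-contained argument here either; it simply writes ``the argument is the same as in the proof of \cite[Lemma 2.1]{HNST}''. So to complete your proof you must either reproduce that argument or supply your own. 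Note that the context of \cite{HNST} is slightly different (there one already assumes finite total curvature), so you should check that the argument transfers using only what is available at this point: $\omega\to 0$, $E\setminus K_\rho\subset Z_\rho$, and the explicit Laurent form of $\sqrt{\phi}$.
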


\begin{proof}
We proceed as in the proof of \cite[Lemma 2.1]{HNST}.
 Assume by contradiction that $P\equiv 0$.

 If $a_{-1}=0$ we have
\begin{equation*}
 \int_{U } \arrowvert \phi (z)\arrowvert \, dA < \infty.
\end{equation*}
where $dA$ is the Lebesgue measure on $\R^2$.
Since $\omega (z) \to 0$ when $\arrowvert z\arrowvert \to \infty$, this would
imply that the end $E$ has finite area, which is absurd, see
\cite[Appendix: Theorem 3 and Remark 4]{Frensel}.

If $a_{-1}\not=0$ the argument is the same as in the proof of
\cite[Lemma 2.1]{HNST}.
\end{proof}

\noindent {\em Proof of Step \ref{finite curvature}. }
We set $\sqrt{\phi (z)}=\sum_{k\geq 1}\frac{a_{-k}}{z^k} +P(z)$ and $m=\deg(P)$.
We define on $U  $ the, eventually,  multivalued function:
\begin{equation*}
 W(z):=\int \big(\sum_{k\geq 1}\frac{a_{-k}}{z^k} +P(z)\big)\, dz
 =\int \sqrt{\phi (z)}\, dz.
\end{equation*}
If $a_{-1}\not=0,$ the function $W(z)$ may be multivalued, but $\Im W(z)$ is
simply valued,
since $h(z)=2 \,\Im W(z)$. Noticing that  $W^\prime =\sqrt{\phi}$, the
holomorphic (possibly multivalued)
function $W$ has no critical
point.

\medskip

In \cite[Section 2]{HNST} its is showed that there exist
connected
and simply connected domains $\Omega_0,\dots,\Omega_{2m+1}$ in $\C$ such that:
\begin{itemize}
 \item $\{z\in \C \mid \arrowvert z\arrowvert >R\} \subset \cup_{k=0}^{2m+1}
 \Omega_k$ for some $R>1$.

 \item the restricted
 map  $W_k:=W_{| \Omega_k} : \Omega_k \rightarrow \C$ is an univalent map for
any
$k$.
\end{itemize}

For  $z\in \Omega_k$, $k=0,\dots,2m+1$, we set $w_k :=W(z)$. By abuse of
notation we just write $w=W(z)$. The range $\wt \Omega_k :=
W_k(\Omega_k)$ is a simply connected domain in $\C$ satisfying:
\begin{enumerate}
 \item If $k$ is an even number, then $\wt \Omega_k$ is the complementary of a
horizontal  half-strip. The non horizontal component of
$\partial \wt \Omega_k$ is a compact arc and $\Im\, w$ is strictly
monotonous along this arc. Moreover $\Re \, w$ is bounded  from above
by a real number $a_k$ along  $\partial \wt \Omega_k,$ see \cite[Figure 3 (a)]{HNST}.

\medskip

\item  If $k$ is an odd number, then $\wt \Omega_k$ is the complementary of
a horizontal  half-strip. The non horizontal component of
$\partial \wt \Omega_k$ is a compact arc and $\Im\, w$ is strictly
monotonous along this arc. Moreover $\Re \, w$ is bounded  from below
by a real number $b_k$ along $\partial \wt \Omega_k$, see \cite[Figure 3 (b)]{HNST}.
\end{enumerate}
Since $dw =\arrowvert \sqrt{\phi (z)} \arrowvert\, \arrowvert dz\arrowvert$ on
each $\Omega_k$,  we deduce from Formula (\ref{metric}) that the  metric induced
by the immersion $X$ on each $\wt \Omega_k$ is:
\begin{equation*}
 d\wt s^2 = 4\ch^2 \wt \omega (w)\, \arrowvert dw\arrowvert^2,
\end{equation*}
where, by abuse of notation, we set $\wt \omega (w)= (\omega\circ W_k^{-1})
(w)$.

Let $\Gamma\subset U  $ be a smooth Jordan curve non
homologous to zero
and let $C>>0$ be a large number satisfying
$\arrowvert W(z) \arrowvert << C$ for any $z\in \Gamma$.

Using \cite[Lemma 2.3]{HNST} and
the
description of each domain $\wt \Omega_k$, we can
construct smooth compact and simple arcs $\wt R_k(C), \wt I_k(C)$,
$k=0,\dots,2m+1$ such that:
\begin{itemize}
\item $\wt R_k(C)\subset \wt \Omega_k$ and
$\wt I_k(C) \subset \wt \Omega_k $.

\item If $k$ is an even number then: $\Re\, w =C$ and
$\Im\, w$ is increasing from $-C$ up to $C$
along $\wt R_k(C)$,
$\Im\, w =C$ and
$\Re\, w$ is increasing from $-C$ up to $C$
along $\wt I_k(C)$.
Moreover, the arcs $\wt R_k(C)$ and $\wt I_k(C)$ make a right angle at the
point $w=C+iC$.

\item If $k$ is an odd number then: $\Re\, w = -C$ and
$\Im\, w$ is increasing from $-C$ up to $C$
along $\wt R_k(C)$,
$\Im\, w = -C$ and
$\Re\, w$ is increasing from $-C$ up to $C$
along $\wt I_k(C)$.
Moreover, the arcs $\wt R_k(C)$ and $\wt I_k(C)$ make a right angle at the
point $w=-C -iC$.

\item Setting $R_k(C):= W_k^{-1}\big(\wt R_k(C)\big)$ and
$I_k(C):= W_k^{-1}\big( \wt I_k(C)\big)$, the
curve
$\Gamma (C) := \bigcup_{k=0}^{2m+1} \Big( R_k(C)\cup I_k(C)\Big)$ is a
piecewise smooth Jordan
curve non homologous to zero, and any of the
 $4m+4$ interior angles
is equal to $\pi/2$.
\end{itemize}

\medskip

Since $\arrowvert W(z) \arrowvert << C$ along $\Gamma$, we have
$\Gamma \cap \Gamma (C)=\emptyset$. We denote by $U(C)$ the annulus in
$U  $  bounded by $\Gamma$ and $\Gamma (C)$. To prove that
the end $E$ has
finite total curvature it suffices to show that $\int_{U(C)} K\, dA$ has finite
limit as $C\to +\infty$.

Since the boundary component $\Gamma$ is smooth and the other boundary
component  $\Gamma (C)$ has exactly $4m+4$ interior angles,
each one being
equal to  $\pi/2$, the Gauss-Bonnet formula gives
\begin{equation*}
 \int_{U(C)} K\, dA + \int_\Gamma k_g\,ds + \int_{\Gamma(C)} k_g\,ds
 =-2(m+1)\pi,
\end{equation*}
where $k_g$ denotes the geodesic curvature.
Therefore it suffices to show that $\int_{\Gamma(C)} k_g\,ds\to 0$ when
$C\to +\infty$.

First we prove that $\int_{I_k(C)} k_g\,ds\to 0$ when
$C\to +\infty$, $k=0,\dots,2m+1$. Since a similar argument shows also that
$\int_{R_k(C)} k_g\,ds\to 0$, we will be done.

Since $W_k : (\Omega_k, ds^2) \rightarrow (\wt \Omega_k, d\tilde s^2)$ is an
isometry, we have
\begin{equation*}
 \int_{I_k(C)} k_g\,ds = \int_{ \wt I_k(C)}k_g\,d\tilde s.
\end{equation*}
Assume that $k$ is even. We set $w=u+iv$ and we consider the parametrization
of $\wt I_k(C)$ given by $w(t)=t+iC$, $t\in [-C,C]$. Using
\cite[Formula (42.8)]{Kreyszig} we derive the geodesic
curvature:
\begin{equation*}
 k_g (w(t))= \pm\frac{\sh \wt \omega }{2\ch^2 \wt \omega}\,
 \frac{\partial \wt \omega}{\partial v } \big(w(t)\big).
\end{equation*}
Therefore,
\begin{equation*}
 \arrowvert k_g (w(t)) \arrowvert \leq
 \frac{1}{2\ch \wt \omega (w(t))}\,
 \arrowvert \grad \wt \omega (w(t))\arrowvert
\end{equation*}
where $\grad $ means the Euclidean gradient.
It is showed in the proof of \cite[Proposition 2.3]{HNST} that there exists a
positive constant $\delta$ such that, outside a compact part of
$\wt \Omega_k$, we have:
\begin{equation*}
\arrowvert \grad \wt \omega (w)\arrowvert <
\delta e^{-d(w,\partial \wt\Omega_k)},
\end{equation*}
where $d(w,\partial \wt\Omega_k)$ is the Euclidean distance between $w$ and
$\partial \wt\Omega_k$. Since, by construction, we have
$\arrowvert \Im (w)\arrowvert \leq C_0$ along $\partial \wt\Omega_k$
for some $C_0 < C$, we get
\begin{equation*}
 \arrowvert \grad \wt \omega (w)\arrowvert < \delta e^{C_0}\,e^{-C},\quad \text{on}\quad \wt I_k (C).
\end{equation*}
Therefore,
\begin{align*}
 \big\arrowvert  \int_{I_k(C)} k_g\,ds \big\arrowvert & \leq
 \int_{ \wt I_k(C)} \arrowvert k_g (w) \arrowvert \,d\tilde s \\
 &= \int_{-C}^C \arrowvert k_g (w(t)) \arrowvert\,
 2\ch \wt\omega (w(t))\, dt\\
 &\leq 2C  \delta e^{C_0}\,e^{-C}.
\end{align*}
We deduce that $\int_{I_k(C)} k_g\,ds\to 0$ when
$C\to +\infty$.

If $k$ is an odd number the argument is the same.

Along the curves $\wt R_k$, the geodesic curvature is given by
$\displaystyle k_g (w(t))= \pm\frac{\sh \wt \omega }{2\ch^2 \wt \omega}\,
 \frac{\partial \wt \omega}{\partial u } \big(w(t)\big)$.

 Moreover we have along $\partial \wt\Omega_k$: $\Re\, w \leq a_k$ if $k$ is an
even number and $\Re\, w \geq b_k$ if $k$ is an odd number.

 Therefore we can
proceed as before to show that $\int_{R_k(C)} k_g\,ds\to 0$ when
$C\to +\infty$.

 This proves that the end $E$ has finite total curvature. \qed

\bigskip

\step \label{converges}
{\em We have $\pain E =\pain P$.
Thus combining with Step \ref{asymptotic}, $E$ converges
to the vertical plane $P$.}

\medskip

\noindent {\em Proof of Step \ref{converges}.}
We  keep the notations of the proof of Step \ref{finite curvature}.

For $k=0,\dots,2m+1$, the map
$F_k:= F\circ W_k^{-1} : \wt \Omega_k \rightarrow \hi2$ is a harmonic map.
Assume that $k$ is an even number. It is proved in \cite[Theorem 2.1]{HNST} that
$\lim_{u\to +\infty} F_k( u+iC)$ exists and does not depend on $C\in \r$.
Also, $\lim_{u\to -\infty} F_{k+1}( u+iC)$ exists and does not depend
on $C\in \r$, moreover the two limits are different.

Since $\pain F(U  )=\{p_\infty,q_\infty \}$, we can assume that
$\lim_{u\to +\infty} F_k( u+iC)=p_\infty$, and
$\lim_{u\to -\infty} F_{k+1}( u+iC)=q_\infty$
for any $C$.

Let $t_0$ be any real number. We want to prove that
$(p_\infty, t_0) \in \pain E$. Since this will be true for any
$t_0\in \r$ and since we can show the same property for $q_\infty$, we could
conclude, using Step \ref{asymptotic} that $\pain E=\pain P$.

Let $C>0$ be large enough so that $C>> \arrowvert t_0\arrowvert$.
Let $\wt R_k(C)\subset \wt \Omega_k$ be the compact arc as in the proof of
Step \ref{finite curvature}. If $C$ is large enough then
$(X\circ W_k^{-1})(\wt R_k(C))$ is a compact arc of $E$ very close of
$p_\infty \times [-2C,2C]$ in the Euclidean meaning. Moreover the height is
increasing from $-2C$ to $2C$ along this arc
Letting $C\to +\infty$
we can extract a sequence $(p_n)$ on $E\cap \{t=t_0\}$ such that
$p_n \to (p_\infty, t_0)$. We have  therefore $\pain P\cap (\pain\hi2\times
\r)\subset \pain E $.
Taking into account  Step \ref{asymptotic} we conclude that $\pain P=\pain E$
and the end $E$ converges
to the vertical plane $P$.
\qed

\bigskip

\step \label{horizontal graph} {\em Assume that the end  $E$ is embedded. Then,
up to a compact part, the
end $E$ is a horizontal
graph.}

\medskip

\noindent {\em Proof of Step \ref{horizontal graph}.}

The end $E$ is conformally
parametrized  by
$U   :=\{ z \in \C \mid 1\leq |z| \}$. The conformal
immersion
$X:U   \rightarrow E\subset  \hi2 \times \r$ is given by $X=(F,h)$ where
$F: U  \rightarrow \hi2$ is a harmonic map and $h: U  \rightarrow \r$
is a harmonic function. Thus, $\phi= -h_z^2$ is holomorphic.
Moreover $\phi$ has the following form
\begin{equation}
 \phi (z)=(\sum_{k\geq 1}\frac{a_{-k}}{z^k} +P(z))^2,
\end{equation}
where $P$ is a polynomial function. We know from Lemma \ref{L.nonzero} that $P$
is not
identically zero. Let  $m\in \N$ be the degree of $P$. Since $X$ is an
embedding and since $\pain F(U)=\{p_\infty,q_\infty\}$, we deduce from
\cite[Theorem 2.1]{HNST} that $m=0$.

Let $\varepsilon >0$ be a small number, $\varepsilon <<1$.
Since $\phi$ does not vanish on $U$, we have
$|n_3|\not=1$ throughout $E$, and therefore
the end
$E$ is transversal to any
slice $\hi2 \times \{t\}$. For any $p\in E$ we denote by $\kappa (p)$ the
geodesic curvature, in $\hi2$, of the intersection curve between $E$ and the
horizontal slice through $p$. We deduce from \cite[Proposition 2.3]{HNST} that,
there exists a compact part $K$ of $E$ such that
\begin{equation}\label{F.estimate-geodesic}
 \arrowvert\kappa (p)\arrowvert < \varepsilon,
\end{equation}
for any $p\in E\setminus K$.

\smallskip

Let $\gamma_1 \subset \hi2$ be a geodesic orthogonal to $\gamma$, thus
$\pain \gamma \cap \pain \gamma_1 =\emptyset$.  Let $\rho>0$, we denote
by $L_\rho^+$ and $L_\rho^-$ the two equidistant curves of $\gamma_1$ at
distance $\rho$. Let
$Z_\rho^\prime $ be the connected component of
$(\hi2\times\r) \setminus ( L_\rho^+ \cup L_\rho^-)\times \r$ containing
the vertical plane $P_{\gamma_1}$.

We deduce from \cite[Proposition 4.3]{HNST} and (\ref{F.estimate-geodesic})
that for $\rho$ large enough
$E\setminus Z_\rho^\prime$ is a horizontal graph with respect to $\gamma_1$.

We infer from \cite[Theorem 2.1]{HNST} that there exists $C_0>0$ such that for
any $t$ satisfying $\arrowvert t \arrowvert >C_0$, the intersection
$E\cap \hi2 \times \{t\}$ is a complete and connected curve $L_t$ which is
$C^1$-close to $\gamma$. Therefore, $L_t \cap Z_\rho^\prime$ is
a horizontal graph with respect to $\gamma_1$.

Thus, up to a compact part, $E$ is a horizontal graph. This achieves the proof
of Step \ref{horizontal graph} and concludes the proof of
Theorem \ref{Main Theorem}. \qed

\bigskip

\section{Appendix.}

\begin{prop}\label{comparaison}
 Let $U\subset \r^3$ be an open set and let $S\subset U$ be an immersed
 $C^2$-surface without boundary. Let $g$ be a $C^1$-metric on $U$, and denote
by $g_{euc}$ the Euclidean metric.

Let $A$ and $\ov A$ be the second fundamental forms of $S$ for, respectively,
the metrics $g_{euc}$ and $g$.

Assume there exist  positive constants $C_1, C_2$ such that
\begin{itemize}
\item $\arrowvert \ov A \arrowvert <C_1$ on $S$
\item $\arrowvert g_{ij} -g_{euc,ij}\arrowvert_{C^1(U)}< C_2$ and
$\arrowvert g^{ij} -g_{euc,ij}\arrowvert_{C^0(U)}< C_2$,
$1\leq i,j \leq 3$.
\end{itemize}
Then, there is a constant $C_3 >0$, depending on $C_1$ and $C_2$ and not on
$S$, such that  $\arrowvert A \arrowvert < C_3$ on $S$.
\end{prop}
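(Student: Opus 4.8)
The plan is to express the Euclidean second fundamental form of $S$ in terms of the $g$-second fundamental form together with the Christoffel symbols of $g$, and then to check that both contributions are controlled by $C_1$ and $C_2$ alone. Everything takes place at a fixed point, so the assumption that $S$ has no boundary plays no real role.

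First I would extract two consequences of the hypotheses on $g$. From $\abs{g_{ij}-g_{euc,ij}}<C_2$ and $\abs{g^{ij}-g_{euc}^{ij}}<C_2$ on $U$ one gets a uniform equivalence of quadratic forms $\Lambda^{-1} g_{euc}\leq g\leq \Lambda\, g_{euc}$ on $U$, with $\Lambda=\Lambda(C_2)$; hence the $g$-norm and the Euclidean norm of any vector or covector differ by a factor depending only on $C_2$. Moreover, in the standard Cartesian coordinates on $\r^3$ the Christoffel symbols of $g$,
\[
 \ov\Gamma^{\,k}_{ij}=\tfrac12\, g^{kl}\bigl(\partial_i g_{jl}+\partial_j g_{il}-\partial_l g_{ij}\bigr),
\]
satisfy $\abs{\ov\Gamma^{\,k}_{ij}}\leq \Gamma_0(C_2)$, since $\abs{g^{kl}}\leq 1+C_2$ and $\abs{\partial_m g_{ij}}<C_2$. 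Writing $D$ for the flat connection of $g_{euc}$ and $\ov D$ for the Levi-Civita connection of $g$, this says exactly that $\ov D_X Y=D_X Y+\ov\Gamma(X,Y)$, where $(X,Y)\mapsto\ov\Gamma(X,Y)$ is a $(1,2)$-tensor with Euclidean operator norm $\leq\Gamma_0(C_2)$.

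Then I would fix $p\in S$, choose a Euclidean-orthonormal basis $v,w$ of $T_pS$, and extend $w$ to a vector field $W$ tangent to $S$ near $p$. Let $N$ be a Euclidean unit normal of $S$ at $p$ and $\ov N$ a $g$-unit normal. Decomposing $\ov D_v W$ into its $g$-orthogonal projections onto $T_pS$ and onto the $g$-normal line gives $\ov D_v W=T+\ov{\mathrm{II}}(v,w)\,\ov N$ with $T\in T_pS$, where $\ov{\mathrm{II}}$ is the scalar second fundamental form of $S$ for $g$ relative to $\ov N$. Since $T\in T_pS$ and $N$ is Euclidean-orthogonal to $T_pS$ we have $\langle T,N\rangle_{euc}=0$, so from $D_v W=\ov D_v W-\ov\Gamma(v,W)$ and $A(v,w)=\langle D_v W,N\rangle_{euc}$ we obtain
\[
 A(v,w)=\ov{\mathrm{II}}(v,w)\,\langle \ov N,N\rangle_{euc}-\langle \ov\Gamma(v,W),N\rangle_{euc}.
\]
Each term is bounded independently of $S$: $\abs{\ov{\mathrm{II}}(v,w)}\leq\abs{\ov A}\,\abs{v}_g\abs{w}_g\leq C_1\Lambda(C_2)$ by hypothesis and the norm equivalence; $\abs{\langle\ov N,N\rangle_{euc}}\leq\abs{\ov N}_{euc}\leq\Lambda(C_2)^{1/2}$ because $\ov N$ is $g$-unit; and $\abs{\langle\ov\Gamma(v,W),N\rangle_{euc}}\leq\abs{\ov\Gamma(v,w)}_{euc}\leq\Gamma_0(C_2)$ because $v,w$ are Euclidean-unit. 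Hence $\abs{A(v,w)}\leq C_3$ for every Euclidean-unit pair $v,w\in T_pS$, with $C_3=C_3(C_1,C_2)$, and therefore $\abs{A}<C_3'$ on $S$ for a constant $C_3'$ still depending only on $C_1$ and $C_2$ (passing from the values on an orthonormal basis to the tensor norm costs only a fixed dimensional factor).

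I do not anticipate a genuine obstacle: the argument is elementary once the right splitting is used. The only points requiring care are bookkeeping — making sure every auxiliary norm and pairing is taken with respect to the correct metric so that all constants depend on $C_1,C_2$ only — and the structural remark that the $g$-tangential part $T$ of $\ov D_v W$ still lies in $T_pS$ and is thus Euclidean-orthogonal to $N$, which is precisely what makes the otherwise uncontrolled tangential term drop out of $A(v,w)$.
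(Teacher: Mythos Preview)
Your argument is correct. The key observation---that the $g$-tangential part $T$ of $\ov D_v W$ lies in $T_pS$ and is therefore also Euclidean-orthogonal to $N$, so it disappears from $\langle D_vW,N\rangle_{euc}$---is exactly what makes the computation go through, and you identify it cleanly.

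The paper's proof reaches the same conclusion by a more coordinate-based route. At a point $p$ it chooses Euclidean coordinates so that $T_pS=\{x_3=0\}$ and the given tangent direction is $\partial_{x_1}$, writes a neighborhood of $S$ as a graph $x_3=u(x_1,x_2)$, and computes the two normal curvatures directly: $\lambda(v)=u_{11}(0)$ for $g_{euc}$, and an explicit formula for $\ov\lambda(v)$ involving $u_{11}(0)$ and $g(\ov\nabla_{\partial_{x_1}}\partial_{x_1},\cdot)(0)$. Solving for $u_{11}(0)$ in terms of $\ov\lambda(v)$ then gives the bound, after checking that the Euclidean coordinate change only worsens the $C^1$ and $C^0$ bounds on $g_{ij}-\delta_{ij}$ and $g^{ij}-\delta^{ij}$ by a fixed factor. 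Your approach bypasses the graph parametrization and the coordinate change entirely by working with the connection-difference tensor $\ov D-D=\ov\Gamma$ directly in the ambient Cartesian frame; this is somewhat cleaner and makes the dependence of constants on $C_1,C_2$ more transparent, while the paper's computation has the advantage of yielding an explicit relation $\lambda(v)=g_{11}\sqrt{g^{33}}\,\ov\lambda(v)-g(\ov\nabla_{\partial_{x_1}}\partial_{x_1},\,\cdot\,)$ between the two normal curvatures.
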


\begin{proof}
 Let $p\in S$ and  let $v\in T_p S$ be a non zero tangent vector. We choose
Euclidean  coordinates $(x_1,x_2,x_3)$ on $U$ so that $p=(0,0,0)$, the tangent
plane
$T_pS$ coincides with the plane $\{x_3=0\}$ and $v$ is tangent to the
$x_1$-axis.

With these new coordinates we certainly have
$\arrowvert g_{ij} -g_{euc,ij}\arrowvert_{C^1(U)}< 9C_2$
and also \linebreak
$\arrowvert g^{ij} -g_{euc,ij}\arrowvert_{C^0(U)}< 9C_2$,
$1\leq i,j \leq 3$.

Thus, a part of $S$ is the graph of a function $u$ defined in a neighborhood of
the origin in the plane  $\{x_3=0\}$.

Let $\lambda (v)$, resp. $\ov \lambda (v)$, be the normal curvature of $S$ at
$p$ in the direction $v$ for the metric $g_{euc}$, resp. $g$. Both curvatures
being computed with respect to normals inducing the same transversal
orientation along $S$.

A straightforward computation shows that
\begin{align*}
 \lambda (v) &= u_{11}(0) \\
 \ov \lambda (v) &= \frac{1} {g_{11}\sqrt{g^{33 }}} \Big[
 g\Big(\ov \nabla_{\partial_{x_1}}\partial_{x_1}, g^{13}\partial_{x_1}
 + g^{23}\partial_{x_2} + g^{33}\partial_{x_3}\Big) (0) + u_{11}(0)\Big],
\end{align*}
where $\ov \nabla$ denotes the Riemannian connection of $(U,g)$.
Therefore,
\begin{equation*}
 \lambda (v)= \big(g_{11}\sqrt{g^{33 }}\big)(0) \, \ov \lambda ( v) -
 g\Big(\ov \nabla_{\partial_{x_1}}\partial_{x_1}, g^{13}\partial_{x_1}
 + g^{23}\partial_{x_2} + g^{33}\partial_{x_3}\Big) (0).
\end{equation*}
Since $\arrowvert g_{ij} -g_{euc,ij}\arrowvert_{C^1(U)}< 9 C_2$ and
$\arrowvert g^{ij} -g_{euc,ij}\arrowvert_{C^0(U)}<9 C_2$,
$1\leq i,j \leq 3$, there is a constant $M>0$, depending only on $C_2$ and not
on $S$, such that
\begin{equation*}
 \arrowvert g\Big(\ov \nabla_{\partial_{x_1}}\partial_{x_1},
g^{13}\partial_{x_1}
 + g^{23}\partial_{x_2} + g^{33}\partial_{x_3}\Big) (0)\arrowvert <M \quad
\text{and} \quad
 g_{11}\sqrt{g^{33 }}(0) <M.
\end{equation*}
Therefore we obtain
\begin{equation*}
 \arrowvert \lambda (v)\arrowvert \leq M \arrowvert \ov \lambda (v)\arrowvert
+M \leq M(\arrowvert \ov A (p)\arrowvert+1) < M(C_1+1),
\end{equation*}
so that it suffices to choose $C_3=2 M(C_1+1)$.
\end{proof}

\end{document}